
\documentclass[11pt]{amsart}
\usepackage{amssymb,amscd, graphicx, epsf}

\setlength{\textwidth}{6in} 
\setlength{\textheight}{8.5in} 
\setlength{\evensidemargin}{.2in} 
\setlength{\oddsidemargin}{.2in}
\setlength{\topmargin}{-.3in}

\begin{document}

\newtheorem{defi}{Definition}
\newtheorem*{thm}{Theorem}
\newtheorem*{conjecture}{Conjecture}
\newtheorem{prop}{Proposition}
\newtheorem*{rem}{Remark}
\newtheorem{numrem}{Remark}
\newtheorem{lemma}{Lemma}
\newtheorem{coro}{Corollary}
\newtheorem{quest}{Question}
\newtheorem{corollary}{Corollary}

\title[Packing and covering densities]{The set of packing and covering densities\\ of convex disks}

\author{W{\l}odzimierz Kuperberg}
\address{W.~Kuperberg, Mathematics \& Statistics, Auburn University, Auburn, AL 36849-5310, USA}
\email{kuperwl@auburn.edu}

\subjclass{52C15}
\keywords{convex disk, packing, covering,  density}

\begin{abstract}   For every convex disk $K$ (a convex compact subset of the plane,
with non-void interior), the packing density $\delta(K)$ and covering density
$\vartheta(K)$ form an ordered pair of real numbers, {\em i.e.}, a point in ${\mathbb R}^2$.
The set $\Omega$ consisting of points assigned this way to all convex disks is the
subject of this article. A few known inequalities on $\delta(K)$ and $\vartheta(K)$
jointly outline a relatively small convex polygon $P$ that contains $\Omega$, while
the exact shape of $\Omega$ remains a mystery.  Here we describe explicitly a leaf-shaped
convex region $\Lambda$ contained in $\Omega$ and occupying a good portion of $P$. 
The sets $\Omega_T$ and $\Omega_L$ of translational packing and covering densities
and lattice packing and covering densities are defined similarly, restricting the allowed
arrangements of $K$ to translated copies or lattice arrangements, respectively.  Due to
affine invariance of the translative and lattice density functions, the sets $\Omega_T$
and $\Omega_L$ are compact.  Furthermore, the sets $\Omega$,  $\Omega_T$ and
$\Omega_L$ contain the subsets $\Omega^\star$,  $\Omega_T^\star$ and $\Omega_L^\star$
respectively, corresponding to the centrally symmetric convex disks $K$, and our leaf $\Lambda$
is contained in each of $\Omega^\star$,  $\Omega_T^\star$ and $\Omega_L^\star$.

\end{abstract}

\maketitle

\section{Introduction: Definitions and Notation.}

An $n$-dimensional {\em convex body} is a compact, convex subset of ${\mathbb R}^n$
that contains an interior point.  A $2$-dimensional convex body is called a {\em convex disk}.  The convex hull of
a set $S$ is denoted by ${\rm Conv}(S)$.

A family $\mathcal F=\{K_i\}$ of subsets  of ${\mathbb R}^n$, each congruent to a given convex
body $K$, is a {\em packing} (of ${\mathbb R}^n$ with copies of $K$) if their interiors are mutually
disjoint, and it is a {\em covering} if their union is ${\mathbb R}^n$.  If $\mathcal F$ is a packing and
a covering, then it is called a {\em tiling}, and we say that such a convex body $K$
{\em admits a tiling of ${\mathbb R}^n$} or that $K$ is a {\em tile}, for brevity.

For a (measurable) set $S$ in ${\mathbb R}^n$, $|S|$ denotes the $n$-dimensional measure of $S$,
and for a family of sets $\mathcal F=\{K_i\}$ and a region $D\subset{\mathbb R}^n$, we will denote by
$|{\mathcal F}\cap D|$ the sum of the volumes of $K_i\cap D$ over all $K_i\in {\mathcal F}$.
Let $B_r$ denote the ball of radius $r$ in ${\mathbb R}^n$, centered at the origin.

Given any family ${\mathcal F}=\{K_i\}$ of congruent copies of a convex body $K$ (in particular, a packing
or a covering), the {\em density} of ${\mathcal F}$ is defined as
$$
d({\mathcal F})=\lim_{r\to\infty}\frac{|{\mathcal F}\cap{B}_r|}{|{B}_r|}\,.
$$

Here, if the limit does not exist, then in case ${\mathcal F}$ is a packing we take the upper limit, {\em limsup},
and in case it is a covering we take the lower limit, {\em liminf}, in place of the limit.

For every convex body $K$, the supremum of the set of densities $d({\mathcal F})$ taken over all packing arrangements
with copies of $K$ is called {\em the packing density}  of $K$ and is denoted by $\delta(K)$. Similarly, the infimum of the
set of densities $d({\mathcal F})$ taken over all covering arrangements with copies of $K$ is called {\em the covering
density} of $K$ and is denoted by $\vartheta(K)$. 

Often certain restrictions on the structure of the arrangements ${\mathcal F}$ are imposed: one may consider
arrangements of translated copies of $K$ only, or just lattice arrangements of translates of $K$.  In these cases,
the corresponding densities assigned to $K$ by analogous definitions are: the {\em translative} packing and covering
density of $K$, denoted by $\delta_T(K)$ and $\vartheta_T(K)$, and the {\it lattice} packing and covering density of $K$,
denoted by $\delta_L(K)$ and $\vartheta_L(K)$, respectively.

It is easily seen that these quantities satisfy the following inequalities:
$$
0\le\delta_L(K)\le\delta_T(K)\le\delta(K)\le1\le\vartheta(K)\le\vartheta_T(K)\le\vartheta_L(K).
\eqno{(1.1)}$$

For more details on these notions and their basic properties see \cite{FTK}. Here we only mention the following: 
\vspace{8pt}
\begin{itemize}

\item[$(i)$] {\sl Each of the extreme densities $\delta(K)$,  $\delta_T(K)$,  $\delta_L(K)$, $\vartheta(K)$, $\vartheta_T(K)$,
and $\vartheta_L(K)$ is attained by a corresponding arrangement $\mathcal F$. In other words, for every convex body $K$
there is a maximum density packing and a minimum density covering in each of the three types: by arbitrary isometries,
by translates only, and by lattice arrangements only.}
\vspace{8pt}
\item[$(ii)$] {\sl Each of the six densities $\delta$,  $\delta_T$,  $\delta_L$, $\vartheta$, $\vartheta_T$, and $\vartheta_L$,
considered as a real-valued function defined on the hyperspace $\mathcal K^n$ of all $n$-dimensional convex bodies
furnished with the Hausdorff metric, is continuous.}
\vspace{8pt}
\item[$(iii)$] {\sl Each of the four densities $\delta_T$,  $\delta_L$, $\vartheta_T$, and $\vartheta_L$ is affine-invariant,
hence each of them can be viewed as a (continuous) function defined on the space $\left[\mathcal K^n \right]$ of affine
equivalence classes of all convex $n$-dimensional bodies---a quotient space of $\mathcal K^n$.}
\vspace{8pt}
\item[$(iv)$] {\sl $\delta(K)=1\Longleftrightarrow \vartheta(K)=1 \Longleftrightarrow$ $K$ is a tile.}
\end{itemize}

\section{The background.}

The main result, as outlined in the Abstract, concerns the packing and covering densities of convex disks. Therefore the brief survey given in this section is focused on the two-dimensional case.

The lattice packing density of the circular disk $B^2$ (the unit ball in ${\mathbb R}^2$)
has been determined by Lagrange \cite{JLL} in 1773: $\delta_L(B^2)=\pi/\sqrt{12}$.  In 1831, Gauss \cite{CFG}
established that $\delta_L(B^3)=\pi/\sqrt{12}$.  The first proof of the equality $\delta(B^2)=\pi/\sqrt{12}$ was given by
Thue \cite{THUE} in 1910.  The analogous result for the covering, $\vartheta(B^2)=2\pi/\sqrt{27}$, is due to
Kershner \cite{RK39}.

The following significant generalization of Thue's and Kershner's theorems was given by L.~Fejes T{\'o}th \cite{LFT53}.   For every convex disk $K$, let $h(K)$ and $H(K)$ denote the maximum area hexagon contained in $K$ and
the minimum area hexagon containing $K$, respectively. Then no packing of the plane with congruent copies of $K$ can be of density greater than ${|K|}/{|H(K)|}$.  Fejes T{\'o}th's proof of the bound of ${|K|}/{|h(K)|}$ for coverings requires that the copies $K_i$ and $K_j$ of $K$ {\em do not cross each other}, meaning that at least one of the sets $K_i\setminus K_J$ and  $K_j\setminus K_i$ is connected, hence the second result is not quite analogous to the first one.  However, since two translates of a convex disk cannot cross each other, the two results produce the following inequalities:
$$
\delta(K)\le\frac{|K|}{|H(K)|}\ \ {\rm and}\ \  \vartheta_T(K)\ge\frac{|K|}{|H(K)|}\,. \eqno{(2.1)}
$$

Along with the theorems of Dowker \cite{CD44} on inscribed and circumscribed polygons in centrally symmetric disks, the above inequalities imply that if  $K$ is centrally symmetric, then
$$
\delta(K)=\delta_T(K)=\delta_L(K)=\frac{|K|}{|H(K)|}\ \ {\rm and}\ \ \vartheta_T(K)=\vartheta_L(K)=\frac{|K|}{|h(K)|}\,. \eqno{(2.2)}
$$
These two equalities generalize Thue's and Kershner's theorems, respectively.

The equality $\vartheta(K)=\frac{|K|}{|h(K)|}$ for centrally symmetric disks $K$, conjectured by L.~Fejest T\'{o}th \cite{LFT50} (see also \cite{LFT53}) still remains unproven.

Chakerian and Lange \cite{RK39} proved that every convex disk $K$ is contained in a quadrilateral of area at most $\sqrt2\, |K|$. Since every quadrilateral admits a tiling of the plane, they established the inequality
$$\delta(K)\ge{\sqrt2}/2=0.7071\ldots\,. \eqno{(2.3)} $$

Henceforth, $K$ will denote an arbitrary (not necessarily centrally symmetric) convex disk, unless otherwise explicitly assumed.

The inequality of Chakerian and Lange was subsequently improved in \cite{KK90} to
$$\delta(K)\ge{\sqrt3}/2=0.8660\ldots\,. \eqno{(2.4)} $$

Concerning upper bounds on covering density, the inequality
$$\vartheta(K)\le 8(2\sqrt3-3)/3=1.2376043\ldots\,, \eqno{(2.5)}$$
established in \cite{WK89}, was improved by D.~Ismailescu \cite{DI98} to
$$\vartheta(K)\le1.2281772\ldots\,. \eqno{(2.6)}$$

The upper bound in the above inequality, though presently the best known, is not likely to be sharp.  However, for the centrally
symmetric convex disks $K$, the second equality in (2.2) and a theorem of Sas \cite{ES39} on maximum area polygons inscribed
in a convex disk imply that
$$\vartheta(K)\le2\pi/\sqrt{27}=1.209\ldots\,, \eqno{(2.7)}$$
which is sharp, since equality holds for the circular disk.

The following inequality, linking the packing density $\delta(K)$ and the covering density $\vartheta(K)$:
$$3\vartheta(K)\le4\delta(K) \eqno{(2.8)}$$
was proved in \cite{WK87}. The inequality is sharp, since equality holds in the case of the circular disk.

\section{Packing and covering via tiling.}

In this section we do not restrict ourselves to two dimensions.  Here all convex bodies are assumed to be $n$-dimensional, also packing and covering means packing and covering of $\mathbb{R}^n$.

We begin with the following simple observation.

Assume that a convex body $K$ is contained in a tile $T$. Then a space tiling with congruent replicas of $T$ naturally yields a space packing with congruent replicas of $K$. Similarly, if $K$ contains $T$, then the tiling yields a space covering with  replicas of $K$.  Such a packing, resp.~covering, is said to be {\em generated} by the tile $T$.

\begin{prop} The density of a packing generated by a tile $T$ containing $K$ is $|K|/|T|$. Similarly, the density of a covering generated by a tile $t$ contained in $K$ is $|K|/|t|$.\end{prop}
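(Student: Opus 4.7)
The plan is a routine boundary-effect calculation. Since the tile $T$ is bounded, say of diameter $d$, the tiles of the $T$-tiling that straddle the sphere $\partial B_r$ lie entirely inside an annular shell of thickness $d$, so their combined volume is $O(r^{n-1})$, negligible against $|B_r|=\Theta(r^n)$. The tiles contained inside $B_r$ therefore dominate asymptotically, and by construction each one contributes exactly $|K|$ to $|\mathcal F \cap B_r|$, forcing the density to equal $|K|/|T|$.

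For the packing statement, let $\{T_i\}$ denote the tiling and let $K_i\subseteq T_i$ be the corresponding copies of $K$. I would set
\[
N_r^- = \#\{i : T_i\subseteq B_r\}, \qquad N_r^+ = \#\{i : T_i\cap B_r\neq\emptyset\}.
\]
The implications $T_i\subseteq B_{r-d}\Rightarrow T_i\subseteq B_r$ and $T_i\cap B_r\neq\emptyset\Rightarrow T_i\subseteq B_{r+d}$, combined with the fact that $\{T_i\}$ tiles $\mathbb R^n$, yield
\[
|B_{r-d}|\le N_r^-\,|T|\le |B_r|\le N_r^+\,|T|\le |B_{r+d}|.
\]
Because the $K_i$ have pairwise disjoint interiors and each is contained in its matching $T_i$, we have $N_r^-\,|K|\le|\mathcal F\cap B_r|\le N_r^+\,|K|$. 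Dividing by $|B_r|$ and using $|B_{r\pm d}|/|B_r|\to 1$ delivers the desired limit $|K|/|T|$.

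For the covering case the inclusion is reversed: $t_i\subseteq K_i$. Setting $d':=\mathrm{diam}(K)$, one has $t_i\subseteq B_{r-d'}\Rightarrow K_i\subseteq B_r$ and $K_i\cap B_r\neq\emptyset\Rightarrow t_i\subseteq B_{r+d'}$, so the analogous sandwich applied to the $t$-tiling produces $|\mathcal F\cap B_r|/|B_r|\to|K|/|t|$. Copies of $K$ may now overlap, but since the definition $|\mathcal F\cap B_r|=\sum_i|K_i\cap B_r|$ counts intersections with multiplicity, no correction is needed.

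I do not anticipate any genuine obstacle: the entire argument is the shell-thickness estimate. The only care required is bookkeeping the inward and outward shells correctly on each side of the sandwich, and remembering that in the covering case the density counts overlaps with multiplicity so that the same counting works unchanged.
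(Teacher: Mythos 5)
The paper deliberately omits the proof of this proposition (``We omit the easy, natural proof''), so there is no argument of record to compare against; your shell-thickness computation is exactly the natural argument being alluded to, and it is correct in substance. One bookkeeping slip worth fixing: to obtain the leftmost inequality $|B_{r-d}|\le N_r^-\,|T|$ you need the implication $T_i\cap B_{r-d}\neq\emptyset\Rightarrow T_i\subseteq B_r$ (so that the tiles counted by $N_r^-$ \emph{cover} $B_{r-d}$), not the containment implication $T_i\subseteq B_{r-d}\Rightarrow T_i\subseteq B_r$ you wrote, which only bounds the count from the wrong side; the same correction applies to $t_i\cap B_{r-d'}\neq\emptyset\Rightarrow K_i\subseteq B_r$ in the covering case, after which the sandwich closes exactly as you describe.
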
 

We omit the easy, natural proof of this proposition.  Following are two simple, but quite useful statements.

\begin{prop} Suppose a densest space packing with congruent copies of $K$, that is, a packing of density $\delta(K)$, is generated by a tile $T$ containing $K$, and let $M$ be a convex body ``sandwiched'' between $K$ and $T$, that is, $K\subset M\subset T$. Then 
$\delta(M)=|M|/|T|$.
\end{prop}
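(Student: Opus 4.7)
The plan is to sandwich $\delta(M)$ between $|M|/|T|$ from both sides, using that the hypothesis together with Proposition 1 pins down $\delta(K)=|K|/|T|$ exactly.

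First I would establish the lower bound $\delta(M)\ge|M|/|T|$. Since $M\subset T$ and $T$ tiles $\mathbb{R}^n$, Proposition 1 applied to $M$ (in place of $K$) says that the tiling by congruent replicas of $T$ generates a packing with congruent copies of $M$ of density exactly $|M|/|T|$. Taking the supremum over all packings of $M$ gives $\delta(M)\ge|M|/|T|$.

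For the upper bound $\delta(M)\le|M|/|T|$ I would exploit the other inclusion $K\subset M$. Given any packing $\mathcal F=\{M_i\}$ of $\mathbb{R}^n$ by congruent copies of $M$, let $\varphi_i$ be the isometry taking a fixed reference copy of $M$ onto $M_i$ and set $K_i=\varphi_i(K)$. Then $\mathcal F'=\{K_i\}$ is again a packing (interiors of the $K_i$ are contained in the mutually disjoint interiors of the $M_i$), and by a direct computation with the definition of density one has $d(\mathcal F')=d(\mathcal F)\cdot |K|/|M|$, since each $M_i$ contributes $|M|$ to $|\mathcal F\cap B_r|$ up to boundary effects while each $K_i$ contributes $|K|$, and these boundary effects vanish in the limit. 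Taking suprema gives $\delta(K)\ge\delta(M)\cdot|K|/|M|$, so $\delta(M)\le\delta(K)\cdot|M|/|K|$. Since the hypothesis, together with Proposition 1, forces $\delta(K)=|K|/|T|$, this becomes $\delta(M)\le|M|/|T|$. Combining the two bounds yields the claim.

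The argument is essentially bookkeeping about densities, so there is no real obstacle; the one place to be careful is the density identity $d(\mathcal F')=d(\mathcal F)\cdot|K|/|M|$, where one must handle the $\limsup$ used in the definition of packing density (the convention fixed earlier in the paper), verifying that the boundary copies $K_i$ or $M_i$ meeting $\partial B_r$ have total volume of order $o(|B_r|)$ as $r\to\infty$; this is standard and uses only that $K$, $M$ have bounded diameter.
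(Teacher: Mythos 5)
Your proof is correct and follows essentially the same route as the paper's: the tiling by $T$ gives the lower bound $\delta(M)\ge|M|/|T|$, and the upper bound comes from observing that any denser packing of $M$ would, by shrinking each copy of $M$ to the copy of $K$ inside it, produce a packing of $K$ denser than $\delta(K)=|K|/|T|$. The paper states this in one sentence; you have merely filled in the density bookkeeping it leaves implicit.
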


Henceforth we will say that set $B$ is {\em sandwiched between} sets $A$ and $C$ to mean that $A\subset B\subset C$ or  $C\subset B\subset A$.

\begin{prop} Suppose a thinnest space covering with congruent copies of $K$, that is, a covering of density $\vartheta(K)$, is generated by a tile $T$ contained in $K$, and let $M$ be a convex body sandwiched between $K$ and $T$. Then $\vartheta(M)=|M|/|T|$.\end{prop}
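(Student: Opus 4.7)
The plan is to establish $\vartheta(M) = |M|/|T|$ by proving matching upper and lower bounds, in direct parallel with Proposition 2. The hypothesis that the thinnest $K$-covering is generated by $T$ (with $T \subset K$), combined with Proposition 1, already yields the ``free'' equality $\vartheta(K) = |K|/|T|$, and this will drive the lower bound. The sandwich hypothesis is $T \subset M \subset K$.

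For the upper bound, the tile $T$ that generates the thinnest covering by $K$ also generates a covering by $M$: since $T \subset M$, replacing each replica of $T$ in the tiling by the correspondingly positioned replica of $M$ produces a covering of $\mathbb{R}^n$. By Proposition 1 its density equals $|M|/|T|$, hence $\vartheta(M) \le |M|/|T|$.

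For the lower bound, I would exploit the inclusion $M \subset K$. Any covering $\mathcal F$ of $\mathbb{R}^n$ by congruent copies of $M$, of density $d$, immediately yields a covering by the correspondingly placed congruent copies of $K$, whose density equals $d \cdot |K|/|M|$ (each replica now contributes $|K|$ instead of $|M|$ to the total measure counted inside $B_r$, with the discrepancy coming only from copies straddling $\partial B_r$). Hence $\vartheta(K) \le d \cdot |K|/|M|$, which rearranges to
$$d \ge \vartheta(K) \cdot \frac{|M|}{|K|} = \frac{|K|}{|T|}\cdot\frac{|M|}{|K|} = \frac{|M|}{|T|}.$$
Taking the infimum over all $M$-coverings gives $\vartheta(M) \ge |M|/|T|$, and combining with the upper bound yields the claimed equality.

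I do not expect any serious obstacle here; the argument is formally dual to that of Proposition 2. The only point requiring a moment's care is the claim that inflating each copy of $M$ to the corresponding copy of $K$ in a covering scales the density by exactly $|K|/|M|$. This is a standard consequence of the fact that, with diameters uniformly bounded, the boundary contributions from replicas meeting but not contained in $B_r$ are of order $r^{n-1}$ and therefore negligible compared to $|B_r|$ as $r \to \infty$, so the ratio $\sum_i |K_i \cap B_r|/\sum_i |M_i \cap B_r|$ tends to $|K|/|M|$.
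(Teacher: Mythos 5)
Your proposal is correct and is precisely the argument the paper intends: the paper states that the proof of Proposition 3 is ``completely analogous'' to that of Proposition 2, and your two bounds (the tile $T\subset M$ generating an $M$-covering of density $|M|/|T|$, and the inflation of any $M$-covering to a $K$-covering forcing $d\ge\vartheta(K)\,|M|/|K|=|M|/|T|$) are exactly that dual argument spelled out. No gaps; the boundary-term remark handles the only point of care.
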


\begin{proof}[Proof of Proposition 2]  Indeed, $T$ generates a packing with replicas of $M$ that is of density $|M|/|T|$, and a packing with higher density is impossible, since such a packing would yield a packing with copies of $K$ of density exceeding $\delta(K)$. \end{proof}

Proof of Proposition 3 is completely analogous to that of Proposition 2.

As an application of the above, we obtain the following two propositions.

\begin{prop} Let $K$ be a convex disk sandwiched between the circular disk $C$ and a regular hexagon $H$ circumscribing $C$ {\em (see Fig.~\ref{ThueKersh}{\em a})}.  Since, by Thue's theorem, $\delta(C)=|C|/|H|$ and since $H$ is a tile, we conclude that
$$\delta(K)=|K|/|H|.$$  Moreover, since the hexagon $H$ is centrally symmetric, it tiles the plane in a lattice-like manner, which implies that 
$$\delta(K)=\delta_T(K)=\delta_L(K).$$
\end{prop}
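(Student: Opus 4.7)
The plan is to apply Proposition 2 directly, using $C$ in the role of ``$K$'', the given disk $K$ in the role of ``$M$'', and the circumscribed hexagon $H$ in the role of ``$T$''. For this I must verify the three hypotheses of Proposition 2. The sandwich condition $C\subset K\subset H$ is given. The tile condition holds because $H$ is a regular hexagon, and every parallelogon (in particular every regular hexagon) tiles $\mathbb{R}^2$. The crucial hypothesis is that a densest packing of $C$ is generated by $H$: this is the content of Thue's theorem combined with a direct area computation. Indeed, placing one copy of $C$ in each tile of the hexagonal tiling generated by $H$ yields a packing of density $|C|/|H|=\pi/\sqrt{12}$, which by Thue's theorem equals $\delta(C)$. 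Hence the hypotheses of Proposition 2 are met, and the conclusion $\delta(K)=|K|/|H|$ follows.

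For the second assertion, I would use that the regular hexagon $H$, being centrally symmetric, admits a \emph{lattice} tiling of $\mathbb{R}^2$ by its translates (the vertices of $H$ together with a suitable lattice produce the standard hexagonal tiling, in which all tiles are translates of $H$). Selecting one translate of $K$ inside each translate of $H$ of this lattice tiling then produces a \emph{lattice packing} of $K$ of density $|K|/|H|$. Thus
\[
\delta_L(K)\ge |K|/|H|=\delta(K).
\]
Combined with the universal chain (1.1), namely $\delta_L(K)\le\delta_T(K)\le\delta(K)$, this forces equality throughout, so $\delta(K)=\delta_T(K)=\delta_L(K)$.

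I do not anticipate a serious obstacle: the proof is essentially a packaged application of Proposition 2, and the only small verification needed is the standard fact that a centrally symmetric convex hexagon tiles by lattice translation. The main thing to be careful about is articulating precisely that the hexagonal tiling realizing Thue's bound is of the form required by Proposition 2, i.e.\ that every circle in the densest packing sits inside a tile of a tiling by congruent copies of $H$, which is immediate from the construction.
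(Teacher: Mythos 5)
Your proposal is correct and follows essentially the same route as the paper: the paper's argument is precisely to invoke Proposition 2 with $C$, $K$, $H$ in the roles of $K$, $M$, $T$ (justified by Thue's theorem and the fact that $H$ tiles), and then to use the lattice tiling by the centrally symmetric hexagon together with the chain of inequalities (1.1) to get $\delta(K)=\delta_T(K)=\delta_L(K)$. No gaps; your write-up just makes explicit the verifications the paper leaves implicit.
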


\begin{figure}[h]
\begin{center}
\includegraphics[scale=.6]{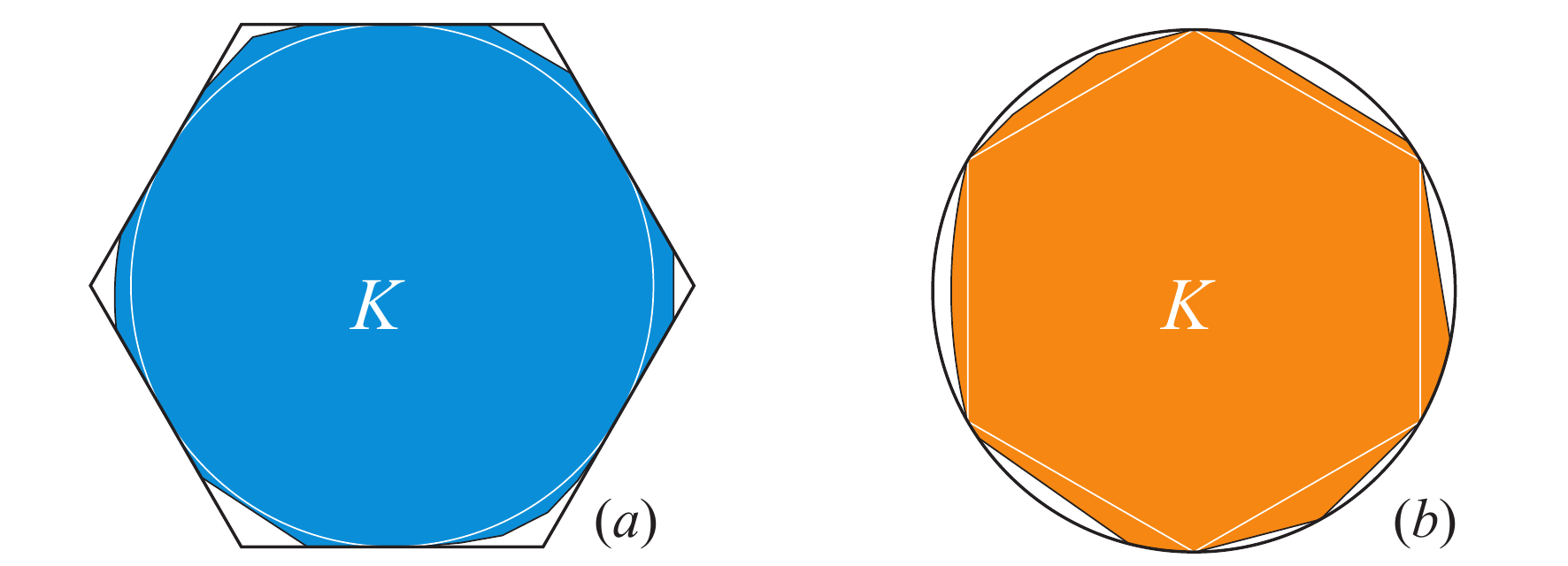}
\caption{Disk $K$ sandwiched between the circle and ({\em a}) the circumscribed, ({\em b}) the inscribed, regular hexagon.}
\label{ThueKersh}
\end{center}
\end{figure}

Similarly:
\begin{prop} Let $K$ be a convex disk sandwiched between the circular disk $C$ and a regular hexagon $h$ inscribed in $C$ {\em (see Fig.~\ref{ThueKersh}{\em b})}. By the above Propositions and by Kershner's theorem, we conclude that 
$$\vartheta(K)=|K|/|h|.$$
Moreover,
$$\vartheta(K)= \vartheta_T(K)= \vartheta_L(K).$$
\end{prop}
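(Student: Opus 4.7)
The plan is to mimic the argument used for Proposition 4, substituting Kershner's theorem for Thue's and the \emph{inscribed} regular hexagon for the circumscribed one. Let $C$ denote a circular disk and $h$ a regular hexagon inscribed in $C$, so that $h\subset C$. Kershner's theorem gives $\vartheta(C)=2\pi/\sqrt{27}=|C|/|h|$, and the extremal thinnest covering of the plane by translates of $C$ is obtained by placing one copy of $C$ around each cell of the hexagonal tiling by translates of $h$. In the terminology of Section 3, this thinnest covering is therefore \emph{generated by the tile $h$ contained in $C$}.

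Since the hypothesis says that $K$ is sandwiched between $C$ and $h$, i.e., $h\subset K\subset C$, Proposition 3 applied to $C$, the generating tile $h$, and the sandwiched body $K$ yields at once
$$\vartheta(K)=|K|/|h|.$$

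For the second assertion, I would exhibit an explicit lattice covering of this density. Because $h$ is centrally symmetric, the hexagonal tiling of the plane by translates of $h$ is a lattice tiling; let $\Lambda$ denote the associated lattice of translation vectors. Since $h\subset K$, the family $\{K+v:v\in\Lambda\}$ already covers the plane, forming a lattice covering with copies of $K$ of density $|K|/|h|$. This shows $\vartheta_L(K)\le|K|/|h|$. Combining with the first assertion and with the chain $\vartheta(K)\le\vartheta_T(K)\le\vartheta_L(K)$ from (1.1), one obtains
$$|K|/|h|=\vartheta(K)\le\vartheta_T(K)\le\vartheta_L(K)\le|K|/|h|,$$
so equality holds throughout, proving $\vartheta(K)=\vartheta_T(K)=\vartheta_L(K)$.

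There is no serious obstacle here: the entire argument is formal once one invokes Kershner's theorem together with Proposition 3. The only substantive input beyond the machinery already set up in Section 3 is the known structure of the Kershner-extremal circle covering as being generated by the inscribed hexagonal tiling, which is classical; everything else follows from central symmetry of $h$ and the inequalities in (1.1).
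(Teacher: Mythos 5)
Your proposal is correct and follows essentially the same route as the paper: the paper proves Proposition 5 by noting that Kershner's theorem exhibits the thinnest covering by $C$ as generated by the inscribed hexagonal tile $h$, invoking Proposition 3 for $\vartheta(K)=|K|/|h|$, and using the central symmetry of $h$ (hence a lattice tiling) together with the chain (1.1) for the equalities $\vartheta(K)=\vartheta_T(K)=\vartheta_L(K)$, exactly as you do. The only cosmetic remark is that your local symbol $\Lambda$ for the translation lattice collides with the paper's later use of $\Lambda$ for the leaf-shaped region, so a different letter would be preferable.
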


\begin{rem}
{\em  In view of the recent proof of the sphere packing conjecture in $\mathbb{R}^3$ (also known as the Kepler Conjecture) by Hales \cite{TH05}, the rhombic dodecahedron circumscribed about the ball $B^3$ generates a densest ball packing. Hence:}

For every convex body $K$ sandwiched between the ball and the rhombic dodecahedron circumscribed about it, the packing density $\delta(K)=\delta_T(K)=\delta_L(K)$ is the ratio between the volume of $K$ and the volume of the rhombic dodecahedron.
\end{rem}

\section{The subset $\Omega$ of the plane, consisting of pairs $\left(\delta(K),\vartheta(K)\right)$.}

Define $\omega: \mathcal{K}^2\to\mathbb{R}^2$ by $\omega(K)=\left(\delta(K),\vartheta(K)\right)$ for every $K\in\mathcal{K}^2$.  By continuity of each of the real-valued functions $\delta$ and $\vartheta$, the function $\omega$ is continuous. Let $\Omega=\omega\left(\mathcal{K}^2\right)$.  The inequalities (2.4), (2.6), (2.7), along with the obvious ones, $\delta(K)\le1$ and $\vartheta(K)\ge1$, can be interpreted as five half-planes in the $(x,y)$-plane $\mathbb{R}^2$, each containing the set $\Omega$, namely
$$(a)\ x\ge0.8660\ldots\,,\ \quad (b)\ x\le1,\quad (c)\ y\le1.2281772\ldots\,,\quad  (d)\ y\ge1,\quad {\rm and}\ (e)\ y\le\frac{4}{3}x\,.$$
 The intersection of these half-planes is a pentagon, we denote it $P$, containing $\Omega$, see Fig.~\ref{pent}.

\begin{figure}[h]
\begin{center}
\includegraphics[scale=.6]{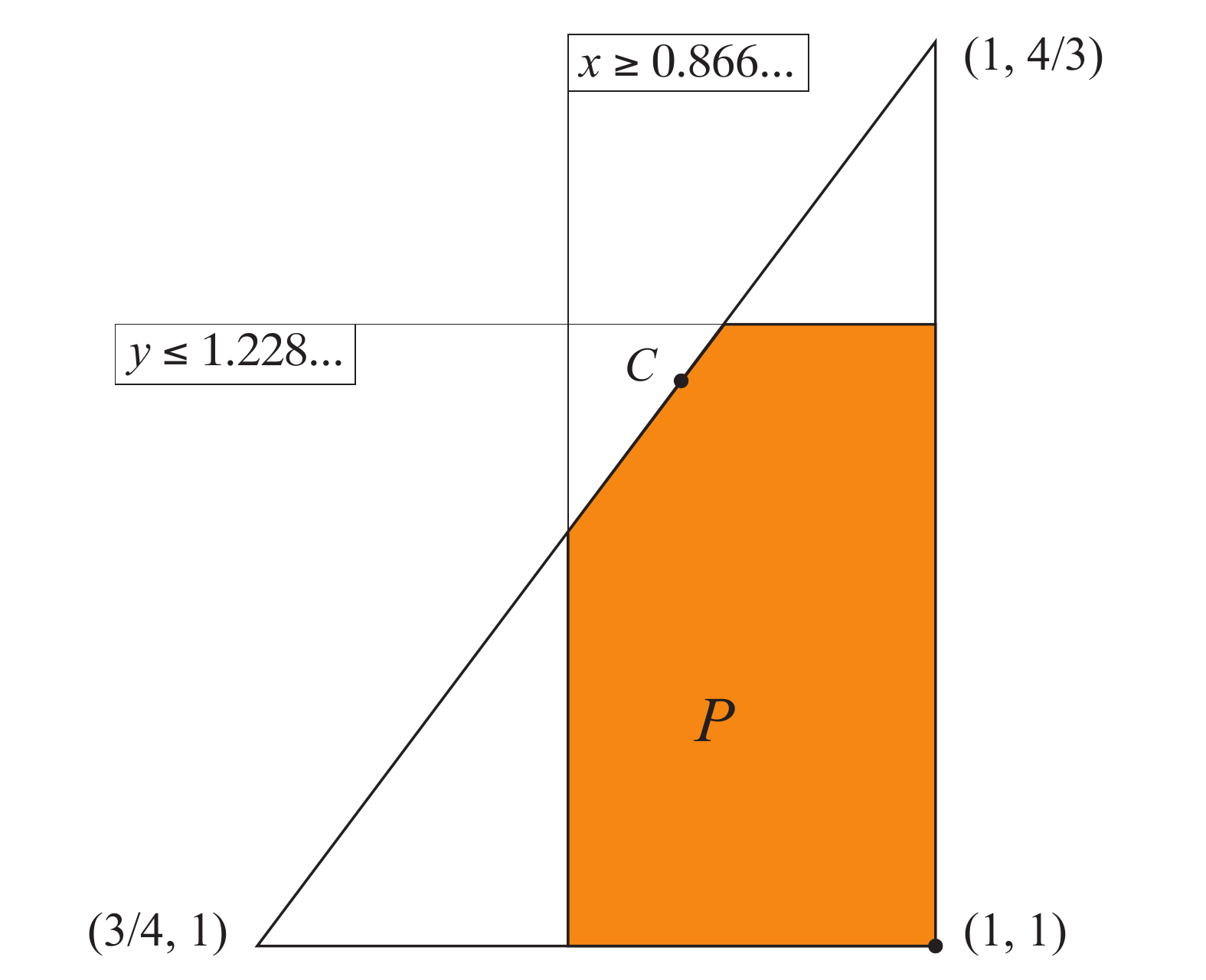}
\caption{Pentagon $P$ that contains the set $\Omega$.}
\label{pent}
\end{center}
\end{figure}

The vertex $(1,1)$ of $P$ corresponds to all plane-tiling polygons, and is the only point of $\Omega$ lying on the union of the two sides of $P$ containing this vertex.  The point $C=\left(\pi/\sqrt{12}, 2\pi/\sqrt{27}\,\right)$ lies on the slant side of $P$ and corresponds to the circular disk $B^2$, and perhaps also to every ellipse---we say {\em perhaps}, because in general the covering density of an ellipse is unknown.  It would be interesting to know if $C$ is the only point of $\Omega$ lying on the line $y=\frac{4}{3}x$, and also if $C=\omega(K)$ for some non-elliptical convex disk $K$.

\begin{numrem}
{\em The sets $\Omega_T$ and $\Omega _L$ are defined in a similar way as $\Omega$, just by replacing the function
$\omega=(\delta,\vartheta)$ by $\omega_T=(\delta_T,\vartheta_T)$ and by $\omega_L=(\delta_L,\vartheta_L)$, respectively.  Also, one can define the subset $\Omega^*$ of $\Omega$, by restricting the function $\omega$ to the subspace $\mathcal{K}^{*2}$ of $\mathcal{K}^2$ consisting of centrally symmetric disks $K$, and then define the sets $\Omega_T^*\subset\Omega_T$ and $\Omega_L^*\subset\Omega_L$ in the same way. Equalities (2.2) imply that $\Omega^*_T=\Omega^*_L$.

Of course, the corresponding six $\Omega$-type sets are readily defined in the very same way for
$n$-dimensional convex bodies with $n>2$.  However, already in dimension $3$, most of the properties
and inequalities analogous to those mentioned in the previous sections either do not hold or are not known
to be true.}
\end{numrem}

\begin{numrem}
{\em The {\em Minkowski sum} of convex bodies, defined by
$$K+M=\{x+y: x\in K, y\in M\},$$
is a continuous map from $\mathcal{K}^n\times\mathcal{K}^n$ to $\mathcal{K}^n$. Therefore the space $\mathcal{K}^n$ is {\em contractible}, which means that there is a homotopy between the identity on $\mathcal{K}^n$ and a constant map.  This implies that the space $\mathcal{K}^n$ is simply-connected.  This in turn implies that each of the sets $\Omega$, $\Omega_T$, and $\Omega_L$ is pathwise-connected, being a continuous image of a pathwise-connected space. But a continuous image of a simply-connected space need not be simply-connected.
However, if a simple closed curve $J$ in $\Omega$ (in $\Omega_T$ or in $\Omega_L$) is the image under $\omega$ ($\omega_T$, $\omega_L$, respectively) of a simple closed curve in $\mathcal{K}^n$, then $J$ is contractible to a point in $\Omega$ ($\Omega_T$, $\Omega_L$, resp.).  Such a contractible simple closed curve $J$ bounds a unique topological disk in $\mathcal{R}^2$, therefore the disk must be contained in $\Omega$ ($\Omega_T$, $\Omega_L$, resp.).  

Since the Minkowski sum of two centrally symmetric sets is a centrally symmetric set, the analogous statements hold for the space $\mathcal{K}^{*n}$ of centrally symmetric $n$-dimensional convex bodies, and to the corresponding sets $\Omega^*$ $\Omega^*_T$ and $\Omega^*_L$.}
\end{numrem}

\begin{figure}[h]
\begin{center}
\includegraphics[scale=.6]{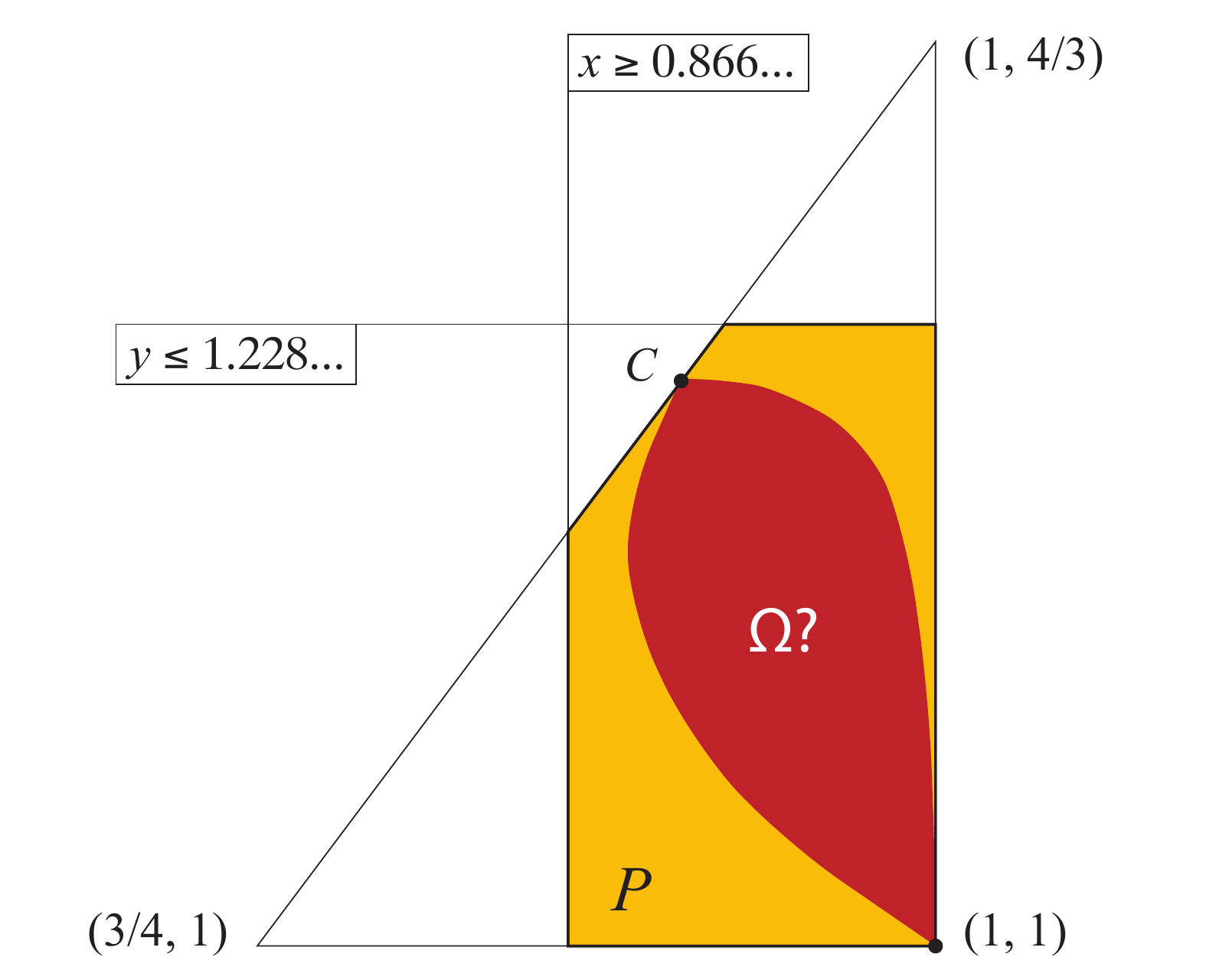}
\caption{The conjectured teardrop shape of $\Omega$.}
\label{omega}
\end{center}
\end{figure}

\begin{numrem}
{\em There are reasons to believe that the shape of $\Omega$ resembles that of a teardrop, as shown in Fig.~\ref{omega}. First, the lower bound $\delta(K)\ge\sqrt3/2=0.866\ldots$ is very likely possible to raise, perhaps fairly close to $0.9$. Also, the upper bound $\vartheta(K)\le1.228\ldots$ is likely possible to lower, perhaps all the way down to $2\pi/\sqrt{27}=1.209\ldots\,$. Then, since the vertex $(1.1)$ of $P$ is the only point that $\Omega$ has in common with each of the two sides of $P$ containing it, it seems reasonable to expect that if a convex disk can pack the plane very efficiently ({\em i.e.}, has packing density close to $1$), then it should cover the plane efficiently as well.  More specifically, we conjecture that there exist a convex-to-the-left curve in $P$ containing $(1.1)$ and separating $\Omega$ from the ray $y=1$, $x<1$, and also there is a convex-to-the-right curve in $P$, containing $(1.1)$ and separating $\Omega$ from the ray $x=1$, $y>1$. Possibly, the two curves may lie on the boundary of $\Omega$, meeting at $C$, and forming the teardrop shape between them.}
\end{numrem}

\section{Certain two arcs in $\Omega$ and the disk they enclose.}

Let $H$ be the regular hexagon of unit edge length let $D_t$ $(0\le t\le 1)$ be a circular disk shrinking monotonically and continuously in $t$, beginning with being circumscribed about $H$ and ending up inscribed in $H$. Let $\mathcal{A}$ be the arc in $\mathcal{K}^2$ consisting of the convex disks $K_t=D_t\cap H$, see Fig.~\ref{CirHex}.

\begin{figure}[h]
\begin{center}
\includegraphics[scale=.45]{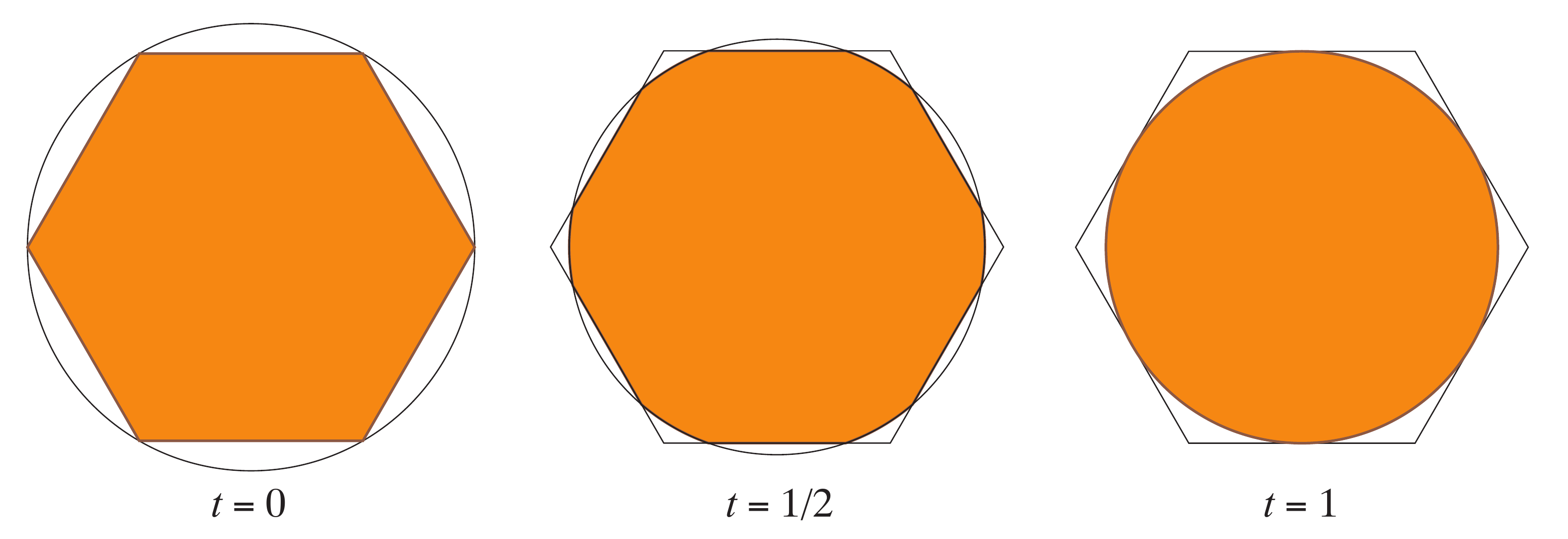}
\caption{The disks $K_t$ representing points of the arc $\mathcal{A}$ for $t=0,\ t=1/2$, and $t=1$.}
\label{CirHex}
\end{center}
\end{figure}

Similarly, let $\mathcal{B}$ be the arc in $\mathcal{K}^2$ consisting of the convex disks $L_t={\rm Conv}\left(D_t\cup H\right)$, see Fig.~\ref{HexCir}.

\begin{figure}[h]
\begin{center}
\includegraphics[scale=.45]{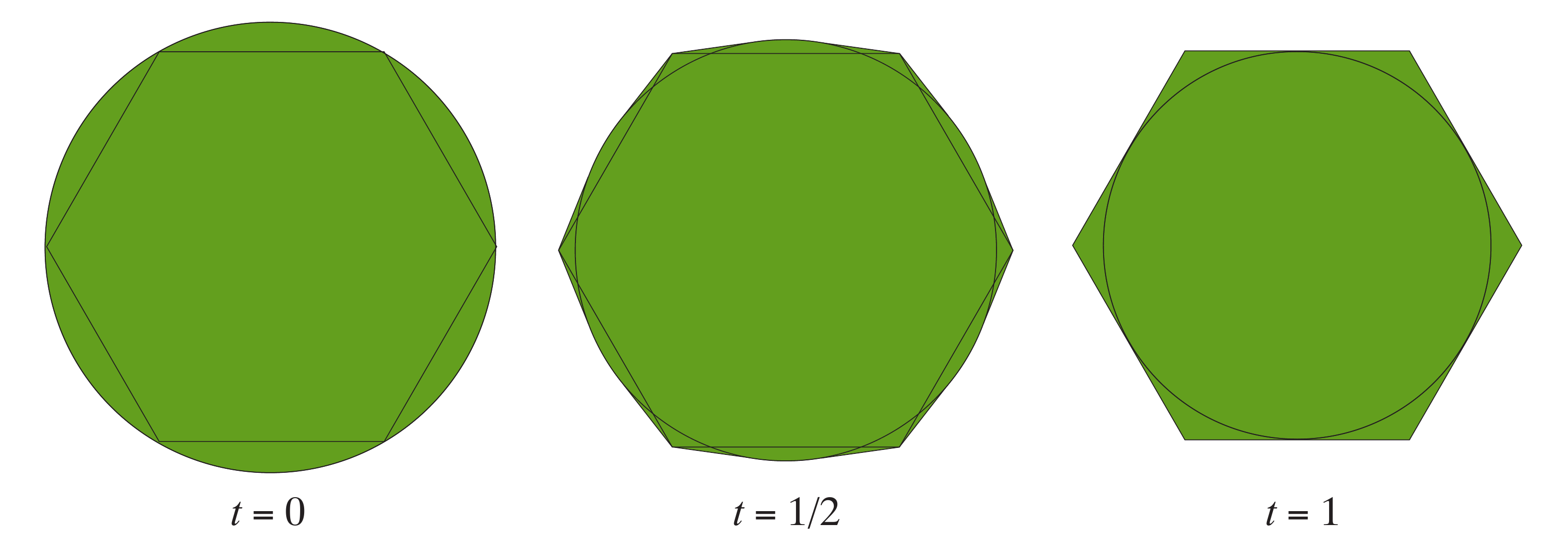}
\caption{The disks $L_t$ representing points of the arc $\mathcal{B}$ for $t=0,\ t=1/2$, and $t=1$.}
\label{HexCir}
\end{center}
\end{figure}

Each of the arcs $\mathcal{A}$ and $\mathcal{B}$ connects points $H$ and $D$, and $\mathcal{A}\cup\mathcal{B}$  is a simple closed curve in $\mathcal{K}^2$. Each of $\alpha=\omega(\mathcal{A})$ and $\beta=\omega(\mathcal{B})$ is a path in $\Omega$ joining point $(1,1)$ with point $C=(\pi/\sqrt{12}, 2\pi/\sqrt{27})$.

For a parametric description of $\alpha$ and $\beta$, we observe that  each disk $K_t$ is sandwiched between the circular disk $D_t$ and a regular hexagon inscribed in $D_t$, and also is sandwiched between the regular hehagon $H$ and the circle inscribed in it.  Similarly, each disk $L_t$ is sandwiched between the circular disk $D_t$ and a regular hexagon circumscribed about $D_t$, and also is sandwiched between the regular hehagon $H$ and the circle circumscribed about it. Using Propositions 4 and 5 and by computing the areas of the disks $K_t$ and $L_t$ and the disks between which they are sandwiched,  we obtain explicit parametric presentations of the paths $\alpha$ and $\beta$ as follows.

In our parameterization of the curve $\alpha=\omega(\mathcal{A})$, we choose the parameter $u$ associated with $\omega(K_t)$ to be the length of a rectilinear component of the boundary of $K_t$.  For the curve $\beta=\omega(\mathcal{B})$, we choose the parameter $v$ associated with $\omega(K_t)$ to be the total length of two rectilinear components of the boundary of $L_t$, see Fig.~\ref{param}.

\begin{figure}[h]
\begin{center}
\includegraphics[scale=.6]{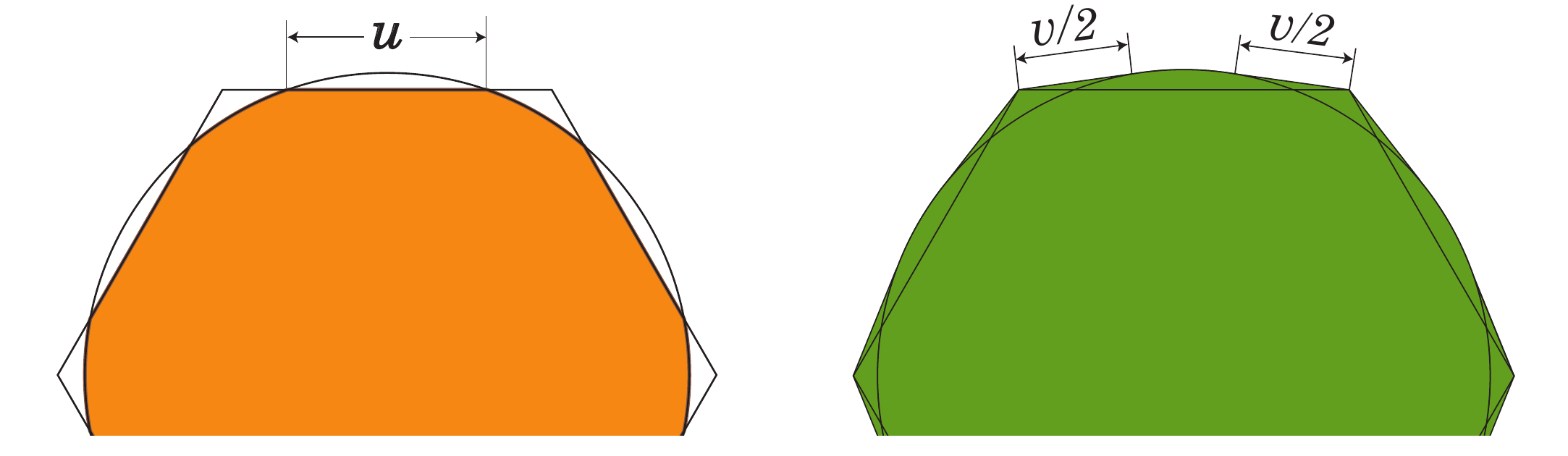}
\caption{The parameters $u$ and $v$.}
\label{param}
\end{center}
\end{figure}

By elementary computations of corresponding areas we obtain the formulae for the packing and covering densities $\delta(K_t)$, $\vartheta(K_t)$ expressed as functions of $u$, and $\delta(L_t)$ and $\vartheta(L_t)$ expressed as functions of $v$, that constitute the following parametric presentations of the arcs $\alpha$ and $\beta$:

\vspace{.2in}

\hspace{.5in}
$\alpha : \left\{
\begin{tabular}{lll}
$x=f_1(u)=\displaystyle\frac{2\pi-12\arcsin{(u/2)}+3\sqrt{4-u^2}}{\sqrt3(4-u^2)}\,;$  \\
{} & $(0\le u\le1)$ \\ \\
$y=g_1(u)=\displaystyle\frac{2\pi-12\arcsin{(u/2)}+3\sqrt{4-u^2}}{3\sqrt3}\,;$ \\
\end{tabular}
\right.
$

\vspace{.2in}

and

\vspace{.2in}

\hspace{.5in}
$\beta : \left\{
\begin{tabular}{lll}
$x=f_2(v)=\sqrt3\left(\displaystyle\frac{\pi}{6}-\arcsin{(v/2)}+\frac{v}{\sqrt{4-v^2}}\right)$;  \\
{} & $(0\le v\le1)$ \\
$y=g_2(v)=\displaystyle\frac{\sqrt3}{3}\left((4-v^2)\left(\frac{\pi}{6}-\arcsin{(v/2)}\right)+v\sqrt{4-v^2}\,\right)$.\\
\end{tabular}
\right.
$

\vspace{.2in}

Path $\alpha$ begins at $(1,1)$ and ends at $C=(\pi/\sqrt{12}, 2\pi\/\sqrt{27})$, and path $\beta$ begins at $C=(\pi/\sqrt{12}, 2\pi\/\sqrt{27})$ and ends at $(1,1)$, therefore $\alpha + \beta$ is a loop.  Since each of the functions $\psi_1$ and $\psi_2$ is strictly monotonic ($\psi_1$ decreases and $\psi_2$ increases), each of the paths $\alpha$ and $\beta$ is an arc. Moreover, $\alpha$ is concave, and $\beta$ is convex, hence the loop $\alpha+\beta$ is a simple closed curve that bounds a leaf-shaped convex disk $\Lambda$, see Fig.~\ref{leaf}.

\begin{figure}[h]
\begin{center}
\includegraphics[scale=.6]{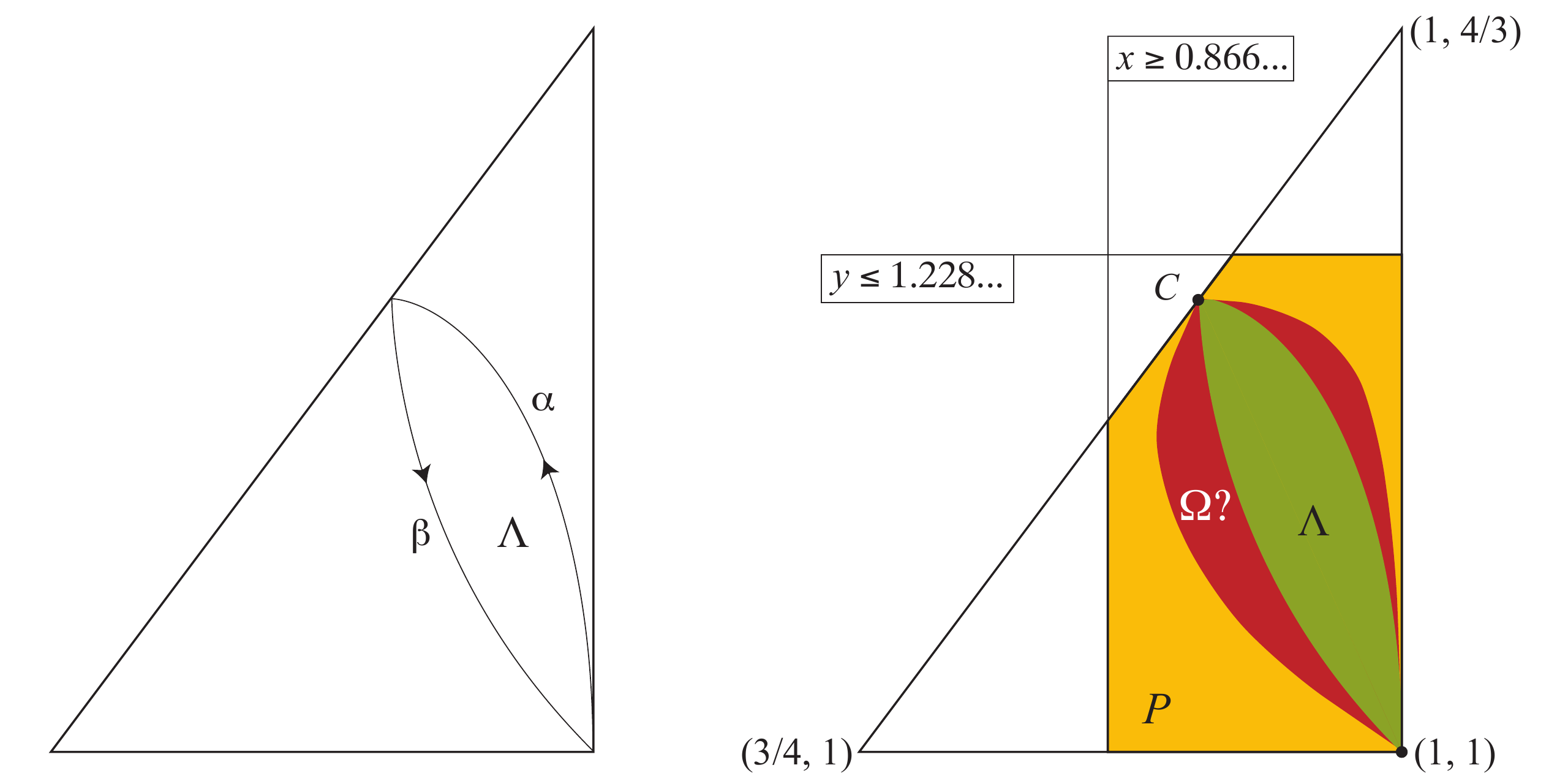}
\caption{The leaf $\Lambda$ and its position within $\Omega$.}
\label{leaf}
\end{center}
\end{figure}

Propositions 4 and 5 yield immediately the equalities
$$\delta(K_t) =\delta_T(K_t)=\delta_L(K_t),$$
$$\vartheta(K_t)=\vartheta_T(K_t)=\vartheta_L(K_t),$$
$$\delta(L_t)=\delta_T(L_t)=\delta_L(L_t),$$
and
$$\vartheta(L_t)=\vartheta_T(L_t)=\vartheta_L(L_t),$$
which imply that the leaf $\Lambda$ is contained in each of  $\Omega$, $\Omega_T$, and $\Omega_L$.

Moreover, since each of the disks $K_t$ and $L_t$ is centrally symmetric, the last observation of Remark 2, implies that $\Lambda$ is contained in each of $\Omega^*$, $\Omega^*_T$, and $\Omega^*_L$ as well.

\section{Questions}

\begin{enumerate}
\item[(Q1)] {\em Are the sets $\Omega$ and $\Omega^*$ compact?} {\bf Comment.} Since each of  the functions $\delta_T$, $\delta_L$, $\vartheta_T$ and $\vartheta_L$ is affine invariant (see Section 1, $(iii)$), and since the space $\left[\mathcal K^n \right]$ of affine
equivalence classes of all convex $n$-dimensional bodies is compact (see Macbeath \cite{McB51}), it follows that each of the sets $\Omega_T$, $\Omega_L$, and $\Omega^*_T = \Omega^*_L$ is compact.

\item[(Q2--Q4)] {\em Is each of the five $\Omega$-type sets simply-connected?  Is each of them a topological disk? Is each of them convex?}

\item[(Q5)]  {\em What are the vertical lines of support from the left for each of the five $\Omega$-type sets?}

\item[(Q6)] {\em What are the horizontal lines of support from above for each of the five $\Omega$-type sets?}

\item[(Q7)]  The inequalities of Ismailescu  \cite{DI01a, DI01b}
$$
\delta_L(K)+\vartheta_L(K)\ge2
$$
and
$$
\vartheta_L(K)\le1+\frac54\sqrt{1-\delta_L(K)}
$$
for every centrally symmetric convex disk $K$ can be expressed as: the set $\Omega^*_L=\Omega^*_T$ lies between the line $x+y=2$ and the curve $y=1+\sqrt{1-x}$. {\em Does the same hold for the set $\Omega$?}

In addition, the upper bound (2.7) on $\vartheta_T(K)$ for centrally symmetric convex disks further restricts the region $P_0$ containing $\Omega^*_L$, as shown in Fig.~\ref{dan}.

\end{enumerate}
\begin{figure}[h]
\includegraphics[scale=.5]{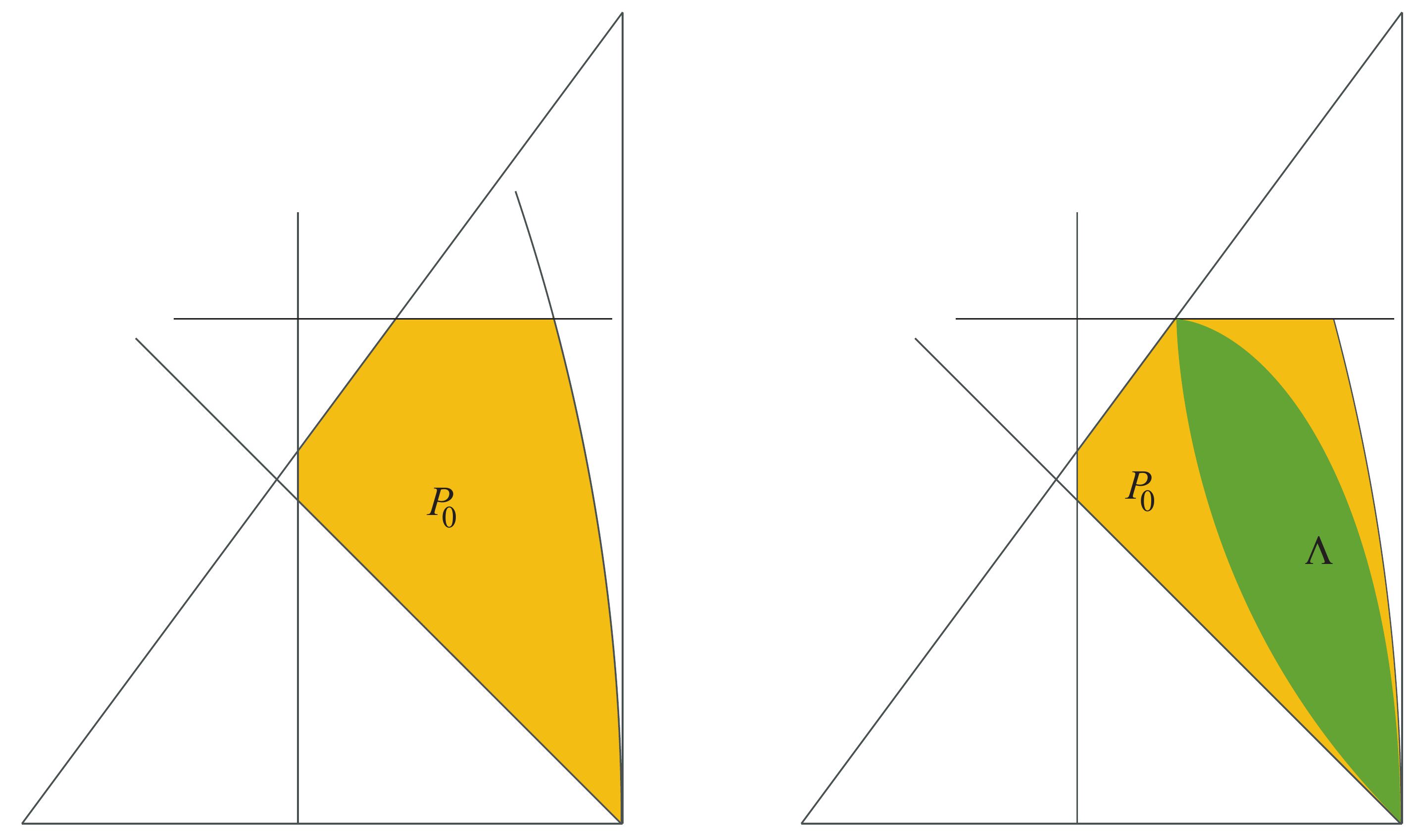}
\caption{The inequalities of Ismailescu bound the region $P_0$ containing $\Omega^*_L$; the leaf $\Lambda$ lies in it.}
\label{dan}
\end{figure}

The inequalities of Ismailescu were inspired by the numerical studies in his own doctoral thesis \cite{DI01a}, where he devised an algorithm to compute the maximum area of a hexagon contained in any given centrally symmetric octagon and the minimum area of a hexagon containing it.  The algorithm enabled him to compute $\delta(K)$ and $\vartheta(K)$ for every centrally symmetric octagon $K$ and he used it to plot points in $\Omega^*_L$ corresponding to a large number of randomly generated centrally symmetric octagons (see Figure \ref{octagons}(a)).  Moreover, based on the algorithm, he gave a complete, analytical description of the subset $U$ of $\Omega^*_L$ consisting of points that correspond to all centrally symmetric octagons, namely:
$$
U =\left\{(x,y)\in{\mathbb{R}}^2: 1\le y\le 4-2\sqrt2,\ \ \frac{5y^2-12y+8}{2y^2-5y+4}
\le x\le
\frac{y \left( y+4+\sqrt{y^2-8y+8}\,\right)}{4y+2}
 \right\},
$$
see Fig.~\ref{octagons}$(b)$.
\begin{figure}[h]
\includegraphics[scale=.6]{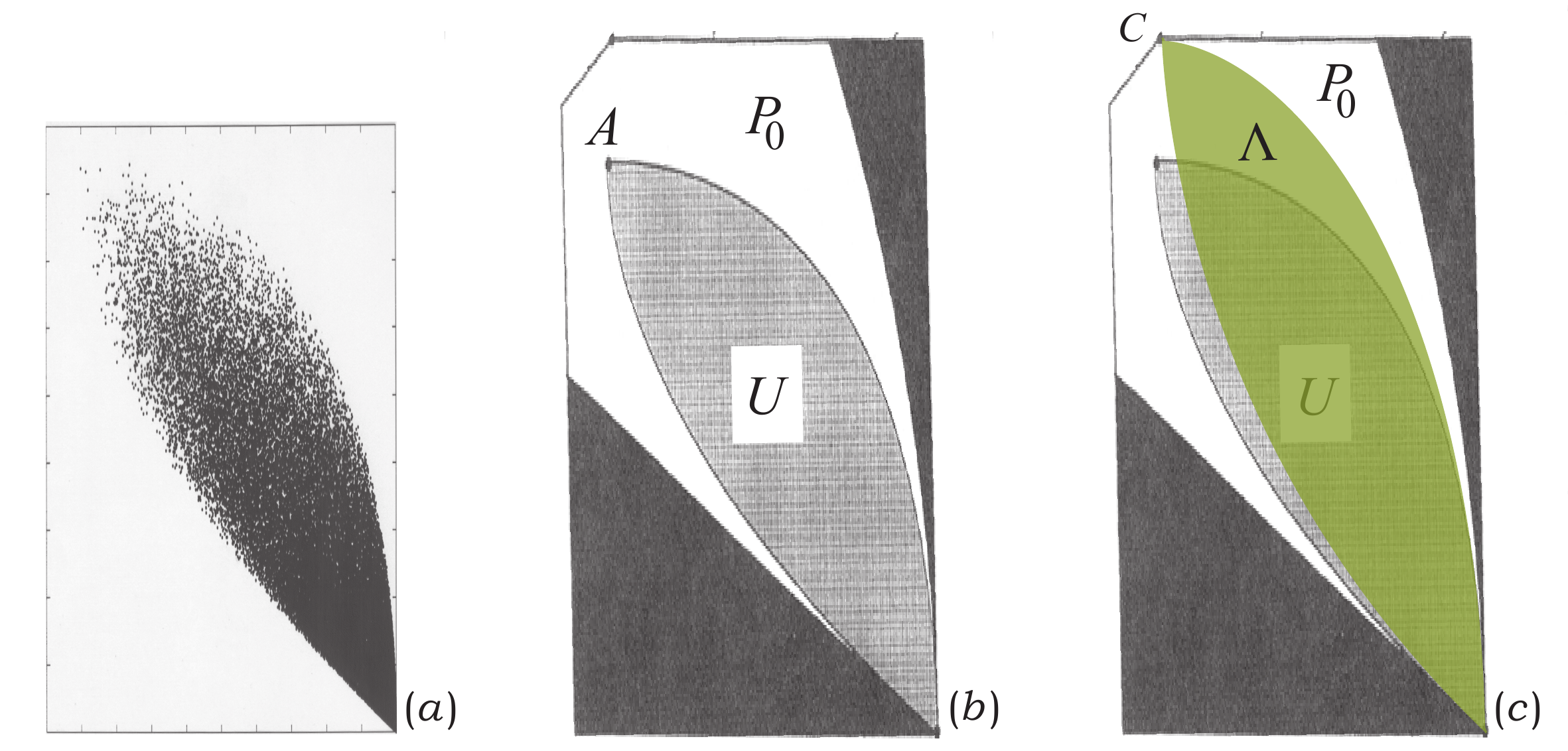}
\caption{The Ismailescu range $U$ and the leaf  $\Lambda$. (Diagrams used and modified by permission of Dan Ismailescu.)}
\label{octagons}
\end{figure}

Figure \ref{octagons} shows side-by-side: $(a)$---the plot of  50,000 points in $\Omega^*_L$ corresponding to random computer-generated centrally symmetric octagons; $(b)$---the analytically described range $U$, whose apex $A$ corresponds to the regular octagon; and $(c)$---the leaf-like disk $\Lambda$, described in Section 5, superimposed on the Ismailescu diagram; the apex $C$ of $\Lambda $ corresponds to the circular disk.

Note the unexpected resemblance between the shapes of $U$ and $\Lambda$. Also note that, somewhat surprisingly, a randomly generated centrally symmetric octagon seems quite likely to be close to a planar tile. {\em Is the same true for a randomly chosen $n$-gon with $n\ge 3$}?

A very recent result of Ismailescu and Kim \cite{IK13} includes the inequality $\delta_L(K)\vartheta_L(K)\ge1$ for every centrally symmetric $K$, which is stronger than Ismailescu's inequality $\delta_L(K)+\vartheta_L(K)\ge2$ mentioned above, further restricting the convex region $P_0$ containing $\Omega^*_L$ shown in Fig.~\ref{dan}.  Still, observe that at the left vertical edge of the region $P_0$ and at the upper-right corner of it (see Fig.~\ref{octagons}c) there seem to be further areas in $P_0$ free from points of $\Omega^*_L$, as if inviting discovery of additional restricting inequalities.

The questions of describing explicitly the sets $\Omega$, $\Omega_T$, $\Omega_L$, $\Omega^*$, and $\Omega^*_L$ remain open and appear to be extremely difficult. However, asking for some better approximations from outside and from inside seems reasonable.

\end{document}